\title{Uniqueness of $BP \lan{n}$}
\author{Vigleik Angeltveit}
\author{John A. Lind}
\newtheorem{theorem}{Theorem}[section]
\newtheorem{lemma}[theorem]{Lemma}
\newtheorem{defn}[theorem]{Definition}
\theoremstyle{definition}
\newtheorem{remark}[theorem]{Remark}
\let\c@equation\c@theorem
\numberwithin{equation}{section}
\newtheorem{lettertheorem}{Theorem}
\newcommand{\cA}{\mathcal{A}}
  \newcommand{\bC}{\mathbb{C}}   \newcommand{\bF}{\mathbb{F}}          
         \newcommand{\bZ}{\mathbb{Z}}
\newcommand{\ttau}{\bar{\tau}}
\newcommand{\xxi}{\bar{\xi}}
\newcommand{\lan}[1]{\langle {#1} \rangle}
\providecommand{\abs}[1]{\lvert #1 \rvert}
\providecommand{\pc}{^{\wedge}_{p}}
\begin{document}
 
\begin{abstract}
Fix a prime number $p$ and an integer $n \geq 0$. We prove that if a $p$-complete spectrum $X$ satisfying a mild finiteness condition has the same mod $p$ cohomology as $BP \lan{n}$ as a module over the Steenrod algebra, then $X$ is weak homotopy equivalent to the $p$-completion of $BP\lan{n}$.
\end{abstract}

\maketitle

\section{Introduction}
Let $MU$ be the spectrum representing complex cobordism, and recall that when localized at a prime $p$ there is an idempotent map $\epsilon : MU_{(p)} \to MU_{(p)}$ with image the Brown-Peterson spectrum $BP$ \cite{Qu69}. The coefficient ring of $BP$ is
\[
 \pi_* BP = \bZ_{(p)}[v_1,v_2,\ldots]
\]
with $\abs{v_k}=2 p^k - 2$. For each $n \geq 0$ we can quotient out by the generators $v_k$ for $k > n$ and construct a spectrum $BP \lan{n}$ with
\[
 \pi_* BP \lan{n} = \bZ_{(p)}[v_1,\ldots,v_n].
\]
The construction of $BP \lan{n}$ can be carried out using the Baas-Sullivan bordism theory of manifolds with singularity or by taking the cofiber of the multiplication by $v_k$ map.  It is customary to let $v_0 = p$ and to let $BP \lan{-1}=H\bF_p$. Special cases include $BP \lan{0}=H\bZ_{(p)}$ and $BP\lan{1}=\ell$, the Adams summand of connective complex $K$-theory localized at $p$. We think of $BP \lan{n}$ for varying $n$ as interpolating between mod $p$ cohomology and the Brown-Peterson summand of $p$-local complex cobordism.

Complex cobordism has played a central role in algebraic topology because of its connection to formal groups. The ring $MU^*(\bC P^\infty)$ carries a formal group and Quillen  \cite{Qu69} showed that it is the \emph{universal} formal group. Similarly, $BP$ determines the universal $p$-typical formal group and $BP \lan{n}$ is closely related to height $n$ formal groups. By further quotienting $BP\lan{n}$ out by $v_k$ for $k < n$ and inverting $v_n$ we get the $n$-th Morava $K$-theory spectrum $K(n)$, which represents the universal height $n$ formal group.

While the homotopy type of $BP$ is well defined because it comes from the Quillen idempotent on $MU$, the classes $v_k$ are not quite canonical. Indeed, there are two popular choices of $v_k$ given by Hazewinkel \cite{Ha76} and Araki \cite{Ar73}, and these are not the only ones. Hence it is not at all obvious that the homotopy type of $BP \lan{n}$ is well defined; the definition of $BP \lan{n}$ appears to depend on the choice of $v_k$ for $k > n$.

In this paper we show that the $p$-adic homotopy type of $BP \lan{n}$ is well defined by showing that it is determined by its mod $p$ cohomology.  To be more precise, fix a model of the spectrum $BP \lan{n}$ constructed using any choice for the generators $v_k$, and let $BP\lan{n}\pc$ denote its $p$-completion.  Recall that $p$-completion leaves mod $p$ cohomology unchanged and that for spectra of finite type, such as $BP\lan{n}$, $p$-completion has the effect of $p$-adically completing all homotopy groups \cite{Bo79}.  Our main result states that any $p$-complete spectrum satisfying standard finiteness conditions which has the same mod $p$ cohomology as $BP \lan{n}$ as a module over the Steenrod algebra is equivalent to $BP \lan{n}\pc$.

\begin{lettertheorem} \label{t:main}
Suppose $X$ is a spectrum which is bounded below, and whose homotopy groups are finitely generated over $\bZ_{p}$. Suppose also that there is an isomorphism
\[ \theta : H^*(BP\lan{n}; \bF_p) \to H^*(X; \bF_p) \]
of modules over the Steenrod algebra. Then there exists a weak homotopy equivalence $f : X \to BP\lan{n}_p^{\wedge}$ of spectra which induces the isomorphism $\theta$ in mod $p$ cohomology.
\end{lettertheorem}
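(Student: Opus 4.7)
The plan is to realize $\theta$ as a permanent cycle in the mod $p$ Adams spectral sequence and lift it to a map $f : X \to BP\lan{n}\pc$. Recall that
\[
H^*(BP\lan{n};\bF_p) \cong \cA // E, \qquad E = E(Q_0, \ldots, Q_n),
\]
where $\cA$ is the mod $p$ Steenrod algebra and $\abs{Q_i} = 2p^i - 1$; the subalgebra $E$ is normal in $\cA$, and $\cA$ is free over $E$ by the Milnor--Moore theorem. The hypotheses on $X$ (bounded below, degreewise finitely generated $\bZ_p$-homotopy) make $X$ automatically $p$-complete and guarantee strong convergence of the Adams spectral sequence
\[
E_2^{s,t} = \mathrm{Ext}_\cA^{s,t}(H^*BP\lan{n}, H^*X) \Longrightarrow [X, BP\lan{n}\pc]_{t-s},
\]
with $\theta \in E_2^{0,0} = \mathrm{Hom}_\cA(H^*BP\lan{n}, H^*X)$.

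Normality of $E$ makes $\cA//E$ a quotient Hopf algebra of $\cA$, and the $\cA$-action on $\cA//E$ factors through this quotient; hence $E$ acts trivially on $\cA//E$, and via $\theta$ also on $H^*X$. The standard change-of-rings theorem then identifies
\[
E_2 \cong \mathrm{Ext}_E(\bF_p, H^*X) \cong H^*(X;\bF_p) \otimes \bF_p[q_0, \ldots, q_n], \qquad q_i \in E_2^{1,\, 2p^i - 1}.
\]
A sparseness argument now rules out any nontrivial differential on $\theta$: a monomial $x \cdot q_0^{a_0} \cdots q_n^{a_n}$ with $x \in H^d(X;\bF_p)$ sits in bidegree $(s, t) = \bigl(\sum a_i,\, d + \sum a_i(2p^i - 1)\bigr)$, so
\[
t - s = d + \sum_{i=0}^n a_i(2p^i - 2) \geq 0
\]
because $d \geq 0$ and each $2p^i - 2 \geq 0$. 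In particular $E_2^{r,r-1} = 0$ for $r \geq 2$, so $\theta$ is a permanent cycle and lifts to a map $f : X \to BP\lan{n}\pc$; by the edge homomorphism, $f^* = \theta$.

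To upgrade $f$ to a weak equivalence, consider its cofiber $C$. Each $\pi_k C$ is finitely generated over $\bZ_p$ by the long exact sequence, and $H_*(C;\bF_p) = 0$ because $f$ is a mod $p$ equivalence. Inductively, if $\pi_j C = 0$ for $j < k$ then $C$ is $(k-1)$-connective, hence $\pi_k C \otimes_{\bZ_p} \bF_p = H_k(C;\bF_p) = 0$, and Nakayama's lemma in the local ring $\bZ_p$ forces $\pi_k C = 0$; thus $C \simeq *$. The main obstacle is the sparseness computation of the preceding paragraph; it works precisely because each $\abs{Q_i} = 2p^i - 1 \geq 1$ forces $2p^i - 2 \geq 0$, preventing any $E_2$-class from landing in total degree $-1$. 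The triviality of the $E$-action on $\cA//E$, which allows the $E_2$-page to be written as a tensor product, is the other input one must verify carefully; it is a consequence of the normality of $E(Q_0,\ldots,Q_n)$ in $\cA$.
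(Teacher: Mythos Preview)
Your argument has a genuine gap at the crucial step: the subalgebra $E_n = E(Q_0,\ldots,Q_n)$ is \emph{not} normal in $\cA$, and the left $E_n$-action on $\cA//E_n$ is \emph{not} trivial.  A quick check at $p=2$, $n=0$: in $\cA//E(Sq^1) = \cA/\cA Sq^1$ one has $Sq^1\cdot[Sq^2] = [Sq^1 Sq^2] = [Sq^3]$, and $Sq^3 \notin \cA\cdot Sq^1$ because in degree~$3$ the left ideal $\cA\cdot Sq^1$ is spanned by the admissible monomial $Sq^2 Sq^1$, which is linearly independent from $Sq^3$.  More generally, the paper computes the action explicitly (Lemma~\ref{l:Q_action}) and finds, for instance at $p=3$, that $Q_1\cdot(\xxi_2^6\xxi_3^3)^* = (\xxi_2^3\xxi_3^3\ttau_3)^* + (\xxi_2^6\ttau_4)^* \neq 0$.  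You may be thinking of the full exterior algebra $E(Q_0,Q_1,\ldots)$ on \emph{all} Milnor primitives, which \emph{is} normal (the quotient is $H^*BP$, on which every $Q_i$ acts trivially); truncating at $Q_n$ destroys normality.

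Consequently your identification $E_2 \cong H^*(X)\otimes \bF_p[q_0,\ldots,q_n]$ is false, and the sparseness argument collapses: the Koszul complex $H^*(BP\lan{n})[v_0,\ldots,v_n]$ has a genuine differential $d = \sum_i v_i Q_i$, and its homology does contain classes in negative odd total degree.  The actual content of the paper is to show that none of these lie in total degree $-1$.  This requires the weight grading (to break $H^*(BP\lan{n})$ into finite $E_n$-summands), the acyclicity result Lemma~\ref{l:divide}, and the ``trading'' algorithm of \S\ref{s:genSS} to push representatives far enough to the right that a degree count forces $\deg(\alpha) \leq 1 - 4p^n$ for any odd-degree survivor $\alpha$.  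The change-of-rings step and your final Nakayama argument are fine; it is precisely the $E_2$ computation in between that needs the work you have skipped.
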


\noindent The mod $p$ cohomology of $BP\lan{n}$ does not depend on the choices made in the construction of $BP\lan{n}$ \cite[Prop.\ 1.7]{Wi75}, so we deduce that the $p$-adic homotopy type of $BP\lan{n}$ does not either.

We believe that a stronger form of this theorem is true, where $\bZ_p$ is replaced by the $p$-local integers and the conclusion is that $X$ is equivalent to $BP\lan{n}$.  The only obstacle is a question of convergence of the Adams spectral sequence, see Remark \ref{remark:convergence}.

One motivation for proving this result is our interest in twisted cohomology theories, and in particular twisted $BP \lan{n}$-cohomology. The case $p=2$ and $n=2$ is of particular interest. In \cite{Wi73, Wi75} Wilson showed that up to localization at $2$ we have
\[ \Omega^\infty BP \lan{2} \simeq \bZ \times K(\bZ,2) \times BSU \times B^3 SU \times BP \lan{2}_{12}, \]
where $BP \lan{2}_{12}$ is the twelfth space in the $\Omega$-spectrum for $BP \lan{2}$. Hence
\[ GL_1(BP \lan{2}) \simeq \bZ/2 \times K(\bZ,2) \times BSU \times B^3 SU \times BP \lan{2}_{12}.\]
By \cite{LaNa11}, $BP \lan{2}$ is an $E_\infty$ ring spectrum at $p=2$. Hence $GL_1(BP \lan{2})$ is an infinite loop space, and its classifying space $BGL_1(BP \lan{2})$ and the associated spectrum $bgl_1(BP \lan{2})$ of units are both defined. We conjecture that the above splitting deloops, so that
\begin{equation}\label{eq:conj_splitting} 
BGL_1 BP \lan{2} \simeq K(\bZ/2, 1) \times K(\bZ, 3) \times B^2 SU \times B^4 SU \times BP \lan{2}_{13}.
\end{equation}
If the splitting \eqref{eq:conj_splitting} exists, then, just as integral cohomology of $X$ can be twisted by a class in $H^1(X, \bZ/2)$ and complex $K$-theory can be twisted by a class in $H^3(X; \bZ)$, the $BP\lan{2}$-cohomology of $X$ can be twisted by a class in
\[ ku^7(X) = [X, B^4 SU]. \] 
This has a nice interpretation in terms of the ``Bockstein'' spectral sequence
\[ E_2^{*,*} = ku^*(X)[v_2] \Rightarrow BP\lan{2}^*(X),\]
with the first nontrivial differential being modified by the twisting. We hope to return to this elsewhere.

\subsection{Main proof idea}
The case $n=1$ of Theorem \ref{t:main} is a result of Adams and Priddy \cite{AdPr76}, and our proof is modeled on their proof, with a few enhancements.  Let $H^*(-)$ denote mod $p$ cohomology, always considered as a module over the Steenrod algebra $\cA$. The proof uses the Adams spectral sequence
\[ E_2^{s,t} = Ext_\cA^{s,t}(H^*(Y), H^*(X)) \Longrightarrow \pi_{t-s} Hom(X,Y^\wedge_p).\]
The isomorphism $\theta \colon H^*(Y) \to H^*(X)$ represents an element $[\theta] \in E_2^{0,0}$.  If $[\theta]$ is a permanent cycle then it represents a map $f^{\wedge}_{p} \colon X \to Y^{\wedge}_{p}$ whose induced map on cohomology is $\theta$. We will implement this plan for $Y = BP\lan{n}$.

The mod $p$ cohomology of $BP \lan{n}$ is given by
\[ H^*(BP\lan{n}) = \cA//E_n, \]
where $\cA$ is the mod $p$ Steenrod algebra and $E_n=E(Q_0,\ldots,Q_n)$ is the exterior algebra on the first $n + 1$ Milnor primitives. We use a change-of-rings theorem to compute the $E_2$ term of the Adams spectral sequence:
\[ E_2^{s,t} = Ext_A^{s,t}(H^*(BP\lan{n}), H^*(X)) \cong Ext_{E_n}^{s,t}(\bF_p, H^*(X)).\]
We are left with $Ext$ over an exterior algebra on $n + 1$ generators, which is much more computable than $Ext$ over the full Steenrod algebra.

We will prove Theorem \ref{t:main} by showing that all the possible differentials on $[\theta]$ land in trivial groups. In other words, we prove that $E_2^{s,t}=0$ for all $(s,t)$ with $s \geq 2$ and $t-s=-1$. (In fact $E_2^{s,t}=0$ also for $(s,t)=(0,-1)$ and $(s,t)=(1,0)$.) This was also the strategy in Adams and Priddy's paper \cite{AdPr76}. In our case the required $Ext$ calculation is more difficult; in essence we have to understand an $(n+1)$-cube of ``commuting'' spectral sequences. In \S\ref{s:genSS}, we make some general observations about the kind of spectral sequences we need to analyze.

\subsection{Acknowledgements}
The authors would like to thank Craig Westerland for many helpful conversations. Without him we would not have started thinking about twisted $BP \lan{n}$-cohomology and we would not have been led to the main result of this paper. The first author was supported by an ARC Discovery grant. The second author was partially supported by the DFG through SFB1085, and thanks the Australian National University for hosting him while this research was conducted.

\section{A cube of spectral sequences} \label{s:genSS}
Suppose that we are given an $(n+1)$-dimensional chain complex $M=M^{*,\ldots,*}$, and that we are interested in the homology of the total complex. In other words, $M$ is an abelian group with $n + 1$ different gradings and we have differentials $d^0,\ldots,d^n$ on $M$, where $d^i$ increases the $i$'th grading of $M$ by $1$. The differentials are required to commute in the graded sense, meaning that $d^i d^j = -d^j d^i$, and we wish to compute the homology of $M$ with respect to the differential $d=d^0+\dotsm+d^n$.

Given a $2$-dimensional chain complex, i.e., a double complex, there are standard spectral sequences
\[ E_2^{s,t} = H^s(H^t(M,d^0), d^1) \Rightarrow H^{s+t}(M, d) \]
and
\[ E_2^{s,t} = H^s(H^t(M,d^1),d^0) \Rightarrow H^{s+t}(M,d).\]

In the situation where we have $n+1$ directions, we get more spectral sequences. We can first take the homology in the $d^n$-direction, then run the $d^{n-1}$ spectral sequence to compute $H_*(M,d^{n-1}+d^n)$. Next we have a $d^{n-2}$ spectral sequence, which takes the form
\[ H^s(H^t(M, d^{n-1}+d^n), d^{n-2}) \Rightarrow H^{s+t}(M,d^{n-2}+d^{n-1}+d^n),\]
and so on until the final $d^0$ spectral sequence. If we can compute all the differentials and solve all the extension problems this tells us $H_*(M, d)$. Or we can choose to take homology with respect to the $d^i$'s in a different order.

We can organize all of this in an $(n+1)$-cube with $M$ at the initial vertex and $H_*(M,d)$ at the terminal vertex. At the vertex corresponding to $I \subset \{0,1,\ldots,n\}$ we have $H^*(M, d^I)$, where $d^I=\sum_{i \in I} d^i$. The $i$'th edge originating at the initial vertex corresponds to taking homology with respect to $d^i$, all the other edges correspond to spectral sequences. For each $j \not \in I$ we have a spectral sequence
\[
 H^s(H^t(M, d^I), d^j) \Rightarrow H^{s+t}(M, d^{I \cup j}).
\]

For $n=2$ the cube of spectral sequences looks as follows.

\[
 \xymatrix@=10pt{
 M \ar@{=>}[rr]^-{H^*(-,d^2)} \ar@{=>}[rd]^>{H^*(-,d^1)} \ar@{=>}[dd]_{H^*(-d^0)} & & H^*(M,d^2) \ar@{=>}'[d][dd]_<<<{d^0-SS} \ar@{=>}[rd]^{d^1-SS} & \\
 & H^*(M,d^1) \ar@{=>}[rr]^<<<<<<{d^2-SS} \ar@{=>}[dd]^<<<{d^0-SS} & & H^*(M,d^{1,2}) \ar@{=>}[dd]^{d^0-SS} \\
 H^*(M,d^0) \ar@{=>}'[r]^<<<<<<{d^2-SS}[rr] \ar@{=>}[rd]_<{d^1-SS} & & H^*(M,d^{0,2}) \ar@{=>}[rd]^{d^1-SS} & \\
 & H^*(M,d^{0,1}) \ar@{=>}[rr]^{d^2-SS} & & H^*(M,d)
 }
\]

We can interpret taking the homology of $M$ with respect to $d^i$ as a degenerate sort of spectral sequence, so that all the edges in the above diagram are spectral sequences.  The resulting ``commutative diagram of spectral sequences'' gives different ways of computing an associated graded of the total cohomology $H^*(M,d)$. Depending on which path we take we are going to get different representatives for classes in $H^*(M,d)$, and different extensions are going to be visible. In fact the cube gives $(n+1)!$ different ways of computing $H^*(M,d)$ up to extensions, and one could hope that in a given situation this is enough to understand $H^*(M,d)$ completely.

This is a rather complicated situation in general. Rather than aiming for a complete understanding of $H^*(M,d)$, we will construct an algorithm that takes as input some $x \in M$ that is a permanent cycle in the sequence of spectral sequences $d^n\mathrm{-}SS, \dotsc, d^0\mathrm{-}SS$, and produces as output a representative for $[x] \in H^*(M,d)$ which is in some sense optimal.

To give the basic idea of our algorithm, consider the double complex $M$ defined by the following figure:
\[ \xymatrix@L=1pt{
 & x & & y & & z \\
a \ar[ur]^{d^1} \ar[urrr]^{d^2} & & b \ar[ur]^-<<<{d^1} \ar[urrr]^{d^2} & & 
} \]
If we first compute $H^*(M, d^2)$ we are left with $x$, and it is clear that $x$ also survives the $d^1$ spectral sequence and represents a nonzero class $[x] \in H^*(M, d)$. But $[x]$ is also represented by $-y$ or by $z$. Our algorithm will give $z$ as a representative for $[x]$.

Here is the algorithm. It goes through $n+1$ steps, starting with step $n$ and ending with step $0$.  We write $[x]_{d^{k,\ldots,n}}$ for the class in $H^*(M; d^{k,\ldots,n})$ determined by $x$.

\begin{defn} \label{d:algo}
Start with an element $x = x_{n + 1}$ in $M$ that is a permanent cycle in the sequence of spectral sequences $d^n\mathrm{-}SS, \dotsc, d^0\mathrm{-}SS$.    We will recursively define elements $x_{k} \in M$ for $0 \leq k \leq n$ so that in step $k$, we take as input a representative $x_{k+1}$ for the class $[x]_{d^{k+1,\ldots,n}}$ and produce as output a representative $x_k$ for the class $[x]_{d^{k,\ldots,n}}$.

For each homogeneous component $x'$ of $x_{k+1}$, check if $x'$ is a $d^k$-boundary. If $x'$ is not a $d^k$-boundary, let $x''=x'$. If $x'$ is a $d^k$-boundary, say, $d^k(a)=x'$, let $x'' = -d^{k+1,\ldots,n}(a)$. Replace $x_{k+1}$ by $\sum x''$. Iterate this procedure until none of the homogeneous components are $d^k$-boundaries.  The resulting element of $M$ is the output $x_{k}$ of the $k$-th step of the algorithm.
\end{defn}

There are two issues with this algorithm. First, there is a choice of the element $a$ satisfying $d^k(a)=x'$ and different choices might give different outputs. Second, without additional assumptions there is no guarantee that this process will terminate in finitely many steps.

In the proof of Theorem \ref{t:main} we deal with the first issue by choosing a particular $a$ (see the proof of Lemma \ref{l:divide}). 
The second issue is taken care of by the following result.

\begin{lemma}
Suppose $M^{*,\ldots,*}$ is bounded below in each grading. Then the above process terminates after finitely many steps.
\end{lemma}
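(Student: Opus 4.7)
The plan is to show termination of each of the $n+1$ steps separately; since the algorithm has finitely many steps in total, this suffices. Fix $k$ and an input $x_{k+1}$ to step $k$, and define the monovariant
\[
 m(x_{k+1}) = \max \bigl\{ \mathrm{gr}_k(x') : x' \text{ is a homogeneous component of } x_{k+1} \text{ that is a } d^k\text{-boundary} \bigr\},
\]
where $\mathrm{gr}_k$ denotes the $k$-th grading and we set $m(x_{k+1}) = -\infty$ if no such component exists. I will show that $m$ strictly decreases after each pass of the inner iterative loop, so boundedness below of the $k$-th grading forces termination in finitely many iterations.

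The key observation is that if $x'$ is a homogeneous component at multidegree $(e_0, \ldots, e_n)$ with $d^k(a) = x'$, then $a$ necessarily has multidegree $(e_0, \ldots, e_{k-1}, e_k - 1, e_{k+1}, \ldots, e_n)$, and hence every homogeneous component of the replacement $x'' = -d^{k+1,\ldots,n}(a) = -\sum_{j > k} d^j(a)$ lies at $k$-th grading exactly $e_k - 1$. Thus each boundary replacement introduces new terms only at strictly smaller $k$-th grading than the boundary being replaced.

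Now let $m = m(x_{k+1})$ and consider one pass of the inner loop. The homogeneous components of the updated $x_{k+1}$ at $k$-th grading $m$ receive contributions only from the original non-boundary components at this grading, since every replacement contributes terms strictly below $k$-th grading $m$ (boundary components were at $k$-th grading $\leq m$, and their replacements land one step lower). These surviving components are unchanged and hence remain non-boundaries, so the new value of $m$ is at most $m - 1$. Because $M^{*,\ldots,*}$ is bounded below in the $k$-th grading, this strictly decreasing integer invariant must reach $-\infty$ after finitely many iterations, at which point the procedure terminates with output $x_k$. The main subtlety worth emphasizing is that one must not track $x_{k+1}$ as a formal expression but as its evaluation in $M$, allowing new terms to combine with surviving ones; this is harmless for the argument because all such combinations occur at $k$-th grading strictly below $m$.
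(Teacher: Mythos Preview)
Your proof is correct and follows essentially the same idea as the paper's: the replacement $x' \mapsto x'' = -d^{k+1,\ldots,n}(a)$ lowers the $k$-th grading by one, so boundedness below forces termination. The paper states this in a single sentence, whereas you make the argument precise by introducing the monovariant $m$ and checking that recombination of terms cannot create new boundaries at $k$-th grading $m$; this extra care is justified but does not represent a different approach.
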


\begin{proof}
This follows because each time we replace $x'$ by $x'' = -d^{k+1,\ldots,n}(a)$ we decrease the $k$'th grading by $1$.
\end{proof}

Now let us do an example from ``nature''.

\[ \xymatrix@C=4pt{
 & & & & \xi_1^{18} \tau_3 \ar@{<-}[lllld] \ar@{<-}[ld] \ar@{<-}[d] & & & \xi_1^9 \xi_2^3 \tau_3 \ar@{<-}[lllld] \ar@{<-}[ld] \ar@{<-}[d] & \xi_1^9 \xi_3 \tau_3 \ar@{<-}[lllld] \ar@{<-}[ld] \ar@{<-}[d] & & \xi_2^6 \tau_3 \ar@{<-}[lllld] \ar@{<-}[ld] \ar@{<-}[d] & \xi_2^3 \xi_3 \tau_3 \ar@{<-}[lllld] \ar@{<-}[ld] \ar@{<-}[d] & \xi_3^2 \tau_3 \ar@{<-}[lllld] \ar@{<-}[ld] \ar@{<-}[d] & \tau_4 \ar@{<-}[lllld] \ar@{<-}[ld] \ar@{<-}[d] \\
\xi_1^{27} & & & \xi_1^{18}\xi_2^3 & \xi_1^{18} \xi_3 & & \xi_1^9 \xi_2^6 & \xi_1^9 \xi_2^3 \xi_3 & \xi_1^9 \xi_3^2 & \xi_2^9 & \xi_2^6 \xi_3 & \xi_2^3 \xi_3^2 & \xi_3^3 & \xi_4
} \]

Let $M$ be as above, where each node indicates a free $\bF_3[v_0,v_1,v_2]$-module on that element.  Each arrow of slope $1/4$ represents the $d^2$ differential which is multiplication by $v_2$, each arrow of slope $1$ represents  $d^1$ which is multiplication by $v_1$, and each vertical arrow represents $d^0$ which is multiplication by $v_0$. Consider the class
\[
 x = v_0^2 v_1 \xi_1^{18} \tau_3.
\]
Notice that $x$ is a permanent cycle after running the $d^{2}$ spectral sequence, the $d^{1}$ spectral sequence, then the $d^0$ spectral sequence.  To set up the algorithm, set $x_3 = x$.  We see that $x$ is not a boundary in the $d^2$ spectral sequence, so step 2 amounts to setting $x_2 = x_3$. In step 1, we get the representative
\[
 x_1 = -v_0^2 v_2 \xi_1^9 \xi_2^3 \tau_3
\]
for $[x]_{d^{1,2}}$ by considering $a=v_0^2 \xi_1^{18} \xi_2^3$.

In step 0, we first replace $x_1$ by
\[
 v_0 v_1 v_2 \xi_1^9 \xi_3 \tau_3 + v_0 v_2^2 \xi_2^3 \xi_3 \tau_3,
\]
then we replace each of the two homogeneous summands by $-v_1 v_2^2 \xi_3^2 \tau_3$ to obtain
\[
 x_0 = -2 v_1 v_2^2 \xi_3^2 \tau_3 = v_1 v_2^2 \xi_3^2 \tau_3.
\]
(The last equality follows because we are in characteristic $3$.)  This is not sufficient to conclude that $x$ is a boundary. The class $[x] \in H^*(M,d)$ is probably nonzero but we prefer not to say anything about it.  The point is that while we started with a representative on the ``left hand side'' of $M$, at each step the algorithm gave us a new representative ``further to the right''.  

If instead we start with
\[
 y = v_0^2 v_1 x = v_0^4 v_1^2 \xi_1^{18} \tau_3, 
\]
the algorithm does the following. Step 2 does nothing because $y$ is not a $d^2$ boundary.  In step $1$ we replace $y$ first by $-v_0^4 v_1 v_2 \xi_1^9 \xi_2^3 \tau_3$ and then by
\[
 y_1 = v_0^4 v_2^2 \xi_2^6 \tau_3.
\]
Then in step $0$ we replace $y_1$ first by $-v_0^3 v_1 v_2^2 \xi_2^3 \xi_3 \tau_3$, then by $v_0^2 v_1^2 v_2^2 \xi_3^2 \tau_3$, then by $-v_0 v_1^3 v_2^2 \tau_4$, and finally by $0$. This reflects the fact that there is a differential $d_4^0(\xi_4)=[y]_{d^{1,2}}$ in the $d^0$ spectral sequence.

\subsection{How this fits with the main theorem}
In the proof of Theorem \ref{t:main} we will be interested in similar examples. We wish to show that there is no homology in total (homological) degree $-1$, where the $\xi_i$'s and $\tau_j$'s are in negative degrees and the $v_k$'s are in positive degrees. We will do this by showing that in fact there is nothing in odd total degree $\geq -1$ by bounding the total degree of elements $v_0^{r_0} \cdots v_{n-1}^{r_{n-1}} x$ that can survive the sequence of spectral sequences.

\section{The homology and cohomology of $BP \lan{n}$}
In this section we collect a few facts about the homology and cohomology of $BP \lan{n}$. In particular, we will determine the action of the Milnor primitives $Q_i$ on $H^*(BP \lan{n})$ for $0 \leq i \leq n$. Our method is to first determine the coaction of the dual of $Q_i$ on homology, then dualize.  

Milnor showed \cite{Mi58} that the dual $\cA_*$ of the mod $p$ Steenrod algebra is the Hopf algebra 
\[
 \cA_* = P(\xi_1,\xi_2,\ldots) \otimes E(\tau_{0},\tau_{1},\ldots) \quad \abs{\xi_i} = 2(p^i - 1), \abs{\tau_i} = 2p^i - 1
\]
with coproduct 
\[
 \psi(\xi_k) = \sum_{i = 0}^k \xi_{k - i}^{p^i} \otimes \xi_i \quad \text{and} \quad \psi(\tau_k) = \tau_k \otimes 1 + \sum_{i = 0}^k \xi_{k - i}^{p^i}\otimes \tau_i.
\]
We will use this notation for all primes, with the understanding that if $p=2$ the multiplicative structure is different, with the element $\tau_i$ denoting the element usually called $\xi_{i+1}$ and $\xi_i$ denoting the element usually called $\xi_i^2$. The difference in multiplicative structure at $p=2$ will play no role in our arguments.

It will be useful to use the images $\bar{\xi}_i$, $\bar{\tau}_i$ of Milnor's generators under the conjugation map of the Hopf algebra $\cA_*$.  The coproduct is then given by:
\begin{equation}\label{eq:conjugate_comult}
\psi(\bar{\xi}_k) = \sum_{i = 0}^k \bar{\xi}_i \otimes \bar{\xi}_{k - i}^{p^i} \quad \text{and} \quad \psi(\bar{\tau}_k) = 1 \otimes \bar{\tau}_k + \sum_{i = 0}^k \bar{\tau}_i \otimes  \bar{\xi}_{k - i}^{p^i}.
\end{equation}

The Milnor primitives $Q_i \in \cA$ are inductively defined by $Q_0 = \beta$ (the mod $p$ Bockstein) and
\[
 Q_i = P^{p^i} Q_{i-1} - Q_{i-1} P^{p^i}.
\]
Using the Milnor basis, the element $Q_i \in \cA$ is dual to $\tau_i \in \cA_*$ and has homological degree $-(2p^i - 1)$.

The mod $p$ cohomology of $BP\lan{n}$ as a left module over the Steenrod algebra is 
\[
 H^*(BP \lan{n}) = \cA//E_n,
\]
where $E_n=E(Q_0,\ldots,Q_n)$ is the exterior algebra on the first $n + 1$ Milnor primitives \cite[Prop.\ 1.7]{Wi75}. Here we quotient out the exterior algebra on the \emph{right}, while we are interested in the \emph{left} action of $E_n$ on the result. Because the Steenrod algebra is non-commutative the left action is still highly nontrivial.

Dually, the mod $p$ homology of $BP\lan{n}$ is the algebra
\[
 H_*(BP\lan{n}) = P(\bar{\xi}_1,\bar{\xi}_2,\ldots) \otimes E(\bar{\tau}_{n+1},\bar{\tau}_{n+2},\ldots).
\]
The left $\cA_*$ comodule structure map $\psi_{L} \colon H_*(BP\lan{n}) \to \cA_* \otimes H_*(BP\lan{n})$ is given on the conjugate generators by formula \eqref{eq:conjugate_comult} and is then extended multiplicatively to all of $H_*(BP\lan{n})$.

Dualizing, we derive the left action of $\alpha \in \cA$ on $f \in H^*(BP\lan{n})$ by the formula
\begin{align*}
(\alpha \cdot f)(x) =  \lan{ \alpha \otimes f, \psi_L(x)} = \sum_i (-1)^{\abs{f}\abs{a_i}} \lan{\alpha, a_i}\lan{f, x_i}
\end{align*}
where $\lan{-, -}$ denotes the pairing of a module and its dual and $\psi_L(x) = \sum_i a_i \otimes x_i$ is the left coaction of $\cA_*$ on $x \in H_*(BP \lan{n})$.  For $x \in \cA_*$, let $x^* \in \cA$ denote the dual. Using this description, it is straightforward to calculate that $Q_i \cdot (\bar{\xi}_{j}^{p^i})^* = (\bar{\tau}_{j + i})^*$ and that $Q_i$ acts as zero on smaller powers of $\xi_j$, as well as $\tau_j$.  From here we can extend multiplicatively to deduce the following lemma.
\begin{lemma}\label{l:Q_action}
Let $x^* = (\bar{\xi}^{e_1}_{j_1} \dotsm \bar{\xi}^{e_m}_{j_m}\cdot \bar{\tau}_J)^* \in \cA$ be the dual of a monomial in the $\bar{\xi}_j$ and the $\bar{\tau}_{j}$.  Then $Q_i$ acts on $x^*$ in the following manner.  For each $k$ such that $x$ contains a factor of $\bar{\xi}_{j_k}^{p^i}$, replace that factor by $\bar{\tau}_{j_k + i}$ and dualize, then take the sum over all such expressions.  In other words we have:
\[
Q_i \cdot x^* = \sum_{\substack{k \; \text{\emph{such that}} \\ p^i \leq e_k}} \Bigl(  \bar{\xi}^{e_k - p^i}_{j_k} \bar{\tau}_{j_k + i} \cdot \prod_{ \ell \neq k} \bar{\xi}_{j_\ell}^{e_\ell} \cdot \bar{\tau}_J \Bigr)^*
\]
\end{lemma}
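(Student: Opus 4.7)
Proof plan:

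The lemma follows by a direct computation from the duality
\[
(\alpha \cdot f)(y) = \sum_i (-1)^{\abs{f}\abs{a_i}}\langle \alpha, a_i\rangle\langle f, y_i\rangle, \qquad \psi_L(y) = \sum_i a_i \otimes y_i,
\]
combined with the fact that $\psi_L \colon H_*(BP\lan{n}) \to \cA_* \otimes H_*(BP\lan{n})$ is an algebra map.

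First, I would identify which summands of $\psi_L(y)$ can possibly contribute to $(Q_i \cdot x^*)(y)$: those whose left tensor factor pairs nontrivially with $Q_i$. Via the triangular change of basis from the Milnor generators $\tau_i, \xi_j$ to their conjugates $\bar{\tau}_i, \bar{\xi}_j$, one sees that a $\bar{\xi}$-$\bar{\tau}$ monomial $\alpha$ satisfies $\langle Q_i, \alpha \rangle \neq 0$ only when $\alpha = \bar{\tau}_i$: any other candidate of cohomological degree $2p^i-1$ either has $\bar{\tau}$-weight zero (wrong parity) or expands in the Milnor basis with no $\tau_i$-summand. Hence $(Q_i \cdot x^*)(y)$ equals, up to a sign depending only on $\abs{x}$, the coefficient of the tensor $\bar{\tau}_i \otimes x$ in $\psi_L(y)$.

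Second, I would expand $\psi_L(y)$ multiplicatively using the formulas \eqref{eq:conjugate_comult}. The coaction on each $\bar{\xi}_j^e$ involves only $\bar{\xi}$-monomials on the left, so the only source of a $\bar{\tau}_i$-term on the left is a $\bar{\tau}_k$-factor of $y$ (necessarily with $k \geq i$), which contributes the summand $\bar{\tau}_i \otimes \bar{\xi}_{k-i}^{p^i}$. A $\bar{\tau}$-weight count then forces the contributing terms to be those in which exactly one $\bar{\tau}_k$-factor of $y$ contributes its $\bar{\tau}_i \otimes \bar{\xi}_{k-i}^{p^i}$ summand while every other factor contributes its trivial left-hand piece, namely $1 \otimes \bar{\xi}_j^e$ for a $\bar{\xi}$-factor and $1 \otimes \bar{\tau}_{k'}$ for another $\bar{\tau}$-factor.

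Finally, the corresponding right-hand factor of such a term is exactly $y$ with its chosen $\bar{\tau}_k$ replaced by $\bar{\xi}_{k-i}^{p^i}$. Requiring this to equal $x = \bar{\xi}_{j_1}^{e_1}\dotsm \bar{\xi}_{j_m}^{e_m}\bar{\tau}_J$ forces $y$ to be obtained from $x$ by choosing some $\ell$ with $e_\ell \geq p^i$ and replacing a $\bar{\xi}_{j_\ell}^{p^i}$ subfactor of $\bar{\xi}_{j_\ell}^{e_\ell}$ by $\bar{\tau}_{j_\ell + i}$; summing over such $\ell$ produces the claimed formula (with the convention that terms in which $\bar{\tau}_{j_\ell + i}$ already appears in $\bar{\tau}_J$ vanish by exteriority). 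The main obstacle is the bookkeeping of Koszul signs from the anticommuting $\bar{\tau}$-generators and from multiplication in the graded tensor product $\cA_* \otimes H_*(BP\lan{n})$; these sign choices are absorbed into the standard conventions identifying ordered $\bar{\xi},\bar{\tau}$-monomials with their dual basis elements.
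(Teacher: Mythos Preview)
Your proposal is correct and follows essentially the same route as the paper: use the duality formula $(\alpha\cdot f)(y)=\langle\alpha\otimes f,\psi_L(y)\rangle$ together with the multiplicativity of $\psi_L$ and the explicit coproduct formulas \eqref{eq:conjugate_comult}. The paper compresses this into the single-generator computation $Q_i\cdot(\bar{\xi}_j^{p^i})^*=(\bar{\tau}_{j+i})^*$ and the phrase ``extend multiplicatively''; your write-up simply unpacks that step, and your observation that $\langle Q_i,\alpha\rangle\neq 0$ among conjugate monomials only for $\alpha=\bar{\tau}_i$ is exactly the mechanism that makes the multiplicative extension go through.
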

 
\noindent For example, at $p=3$ we have
\[ 
 Q_1\cdot (\bar{\xi}_2^6 \bar{\xi}_3^3)^* = (\bar{\xi}_2^3 \bar{\xi}_3^3 \bar{\tau}_3)^* + (\bar{\xi}_2^6 \bar{\tau}_4)^*.
\]

Our convention is to use homological grading, so $H^*(BP \lan{n})$ is concentrated in non-positive degree. Hence $\bar{\xi}_j^*$ is in degree $-(2p^j-2)$ and $\bar{\tau}_j^*$ is in degree $-(2p^j-1)$.

\begin{lemma} \label{l:divide}
Fix $0 \leq i \leq n$ and suppose $x^* \in H^*(BP \lan{n})$ is in odd degree with $Q_i \cdot x^*=0$. Then there is a not necessarily unique $y^* \in H^*(BP \lan{n})$ with $Q_i \cdot y^*=x^*$.
\end{lemma}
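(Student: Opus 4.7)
The plan is to dualize the problem and prove the corresponding statement for the right action of $Q_i$ on $H_*(BP\lan{n})$: namely, that the $Q_i$-homology of $H_*(BP\lan{n})$ is concentrated in even total degrees. This dualizes directly to the analogous statement on $H^*(BP\lan{n})$, from which the lemma follows immediately.

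First I would establish that $Q_i$ acts on
\[ H_*(BP\lan{n}) = P(\xxi_1, \xxi_2, \ldots) \otimes E(\ttau_{n+1}, \ttau_{n+2}, \ldots) \]
as a graded derivation. This holds because $Q_i$ is primitive in the Hopf algebra $\cA$ and the $\cA_*$-coaction is multiplicative. Using \eqref{eq:conjugate_comult} together with the pairing $\langle Q_i, \ttau_j\rangle = -\delta_{ij}$ (which follows from $\ttau_j = -\tau_j$ modulo decomposables, a consequence of the antipode formula), a direct computation yields
\[ \xxi_k \cdot Q_i = 0 \quad \text{and} \quad \ttau_j \cdot Q_i = -\xxi_{j-i}^{p^i} \]
for all $k \geq 1$ and $j \geq n+1$. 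This is the homological shadow of Lemma \ref{l:Q_action}.

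Second, I would recognize $(H_*(BP\lan{n}), Q_i)$ as a Koszul complex over the polynomial ring $P = \bF_p[\xxi_1, \xxi_2, \ldots]$. The differential is $P$-linear, sends $\ttau_j$ to $-\xxi_{j-i}^{p^i}$ for each $j \geq n+1$, and hence identifies the complex as the Koszul complex on the sequence $\{\xxi_k^{p^i}\}_{k \geq n+1-i}$ of $p^i$-th powers. Since these elements are algebraically independent in $P$, they form a regular sequence, so the Koszul complex resolves the quotient and all its homology sits in exterior degree zero:
\[ H_*(H_*(BP\lan{n}), Q_i) \cong P/(\xxi_k^{p^i} : k \geq n+1-i) \cong \bF_p[\xxi_1, \ldots, \xxi_{n-i}] \otimes \bigotimes_{k \geq n+1-i} \bF_p[\xxi_k]/(\xxi_k^{p^i}). \]
Every generator of this algebra lies in even total degree (since $|\xxi_k|=2(p^k-1)$), so the $Q_i$-homology of $H_*(BP\lan{n})$ is concentrated in even degrees. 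Dualizing gives the lemma: any odd-degree $Q_i$-cycle in $H^*(BP\lan{n})$ is necessarily a $Q_i$-boundary.

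The main technical point is the derivation computation in the first step, which requires careful tracking of the antipode of $\cA_*$ to convert formulas for the $\tau_j$ into formulas for the conjugated generators $\ttau_j$. The infinitely many Koszul factors cause no trouble, because each bidegree of $H_*(BP\lan{n})$ is finite dimensional and involves only finitely many generators, so the Koszul homology computation proceeds on finitely generated subcomplexes and passes to the limit.
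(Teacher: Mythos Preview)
Your argument is correct and takes a genuinely different route from the paper.  The paper proves the lemma constructively: it orders the monomials of $H_*(BP\lan{n})$ lexicographically (by the $\ttau_j$'s first, then the $\xxi_i$'s), picks the largest monomial $x_1$ in $x$, finds the largest $\ttau_a$ dividing it, and sets $y_1 = (x_1/\ttau_a)\cdot \xxi_{a-i}^{p^i}$.  Then $Q_i \cdot y_1^* = x_1^* + (\text{smaller terms})^*$, and one proceeds by induction on the ordering.  No Koszul complex appears.

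Your approach is cleaner and yields more: you compute the entire $Q_i$-homology of $H_*(BP\lan{n})$ as the explicit truncated polynomial algebra $\bF_p[\xxi_1,\ldots,\xxi_{n-i}]\otimes\bigotimes_{k\geq n+1-i}\bF_p[\xxi_k]/(\xxi_k^{p^i})$, from which the odd-degree vanishing is immediate.  The paper's approach, by contrast, produces an explicit preimage $y^*$, and the paper later refers back to this specific construction when running the algorithm of Definition~\ref{d:algo} (``we deal with the first issue by choosing a particular $a$ (see the proof of Lemma~\ref{l:divide})'').  That said, the degree estimates in the proof of the main theorem depend only on the existence of a preimage, not on the particular choice, so your non-constructive argument would serve equally well there.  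One minor remark: rather than invoking finite-type subcomplexes, you could use the weight grading introduced in \S5 to split $H_*(BP\lan{n})$ into finite-dimensional $Q_i$-stable pieces, which makes the Koszul argument entirely finite.
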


\begin{proof}
Order the monomials in $x \in H_*(BP \lan{n})$ lexicographically by looking at the $\bar{\tau}_j$, starting with the largest $j$, and then by the $\bar{\xi}_i$, starting with the largest $i$.

Using this ordering, let $x_1$ be the largest monomial in $x$ and suppose $\ttau_a$ is the largest $\ttau_j$ that divides $x_1$. (Because $x$ is in odd degree some such $\ttau_a$ must exists.) Note that the summand $x_1$ cannot contain any $\bar{\xi}_b^{p^i}$ for $b+i>a$ because then $Q_i \cdot x_1^*$ would contain a $(x_1/\xxi_b^{p^i} \cdot \ttau_{b+i})^*$ and this cannot cancel with $Q_i \cdot (x^*-x_1^*)$ because every term of $Q_i \cdot (x^*-x_1^*)$ is smaller.

Now let $y_1=x_1/\ttau_a \cdot \xxi_{a-i}^{p^i}$. Then $Q_i \cdot y_1^* = x_1^* + (x_1')^*$ where $x_1'$ is smaller. Replace $x^*$ by $x^*-Q_i \cdot y_1^*$ and proceed by induction.
\end{proof}

\section{The Adams spectral sequence}
Under the hypotheses of Theorem \ref{t:main} there is a conditionally convergent Adams spectral sequence
\[
 E_2^{s,t} = Ext_\cA^{s,t}(H^*(BP\lan{n}), H^*(X)) \Longrightarrow \pi_{t-s} Hom(X,BP\lan{n}\pc).
\]

\noindent By a change-of-rings isomorphism \cite[A1.3.12]{Ra86}, the $E_2$ term of the Adams spectral sequence is given by
\[ E_2^{s,t} = Ext_\cA^{s,t}(\cA // E_n, H^*(X)) \cong Ext_{E_n}^{s,t}(\bF_p, H^*(X)).\]
Using the given isomorphism $\theta$, we make the identification $H^*(X) \cong \cA//E_n$, with $E_{n}$ acting on the left. The $\bF_p$ dual of the Koszul resolution of $\bF_p$ as an $E_n$-module is the chain complex $P(v_0, \dotsc, v_n) \otimes E_n$ graded by the degree of homogeneous polynomial generators and with differential 
\[
d(p \otimes \omega) = \sum_i v_i p \otimes (Q_i \omega).
\]
Thus the $E_2$ term of the Adams spectral sequence is isomorphic to the homology of the chain complex 
\[ M = H^*(BP \lan{n})[v_0,\ldots,v_n], \]
where the differential is given by
\[ d(x^*) = \sum_{i=0}^n Q_i(x^*) v_i. \]
Notice that the element $v_i$ is in bidegree $(1, 2p^i - 1)$.  We are now in the situation discussed in Section \ref{s:genSS}.

Since $H^*(BP\lan{n})$ is finite in each degree, this description of the $E_2$ page shows us that the group $E_{2}^{s, t}$ is finite in each bidegree.  This implies that Boardman's derived $E_{\infty}$ term $R E_{\infty} = 0$ and so the spectral sequence converges strongly \cite[Theorem 7.1]{Bo99}.  In particular, if the given map $\theta : H^*(BP\lan{n}) \to H^*(X)$, considered as a class $[\theta] \in E_2^{0,0}$, survives to $E_\infty^{0,0}$, then $[\theta]$ represents a map $f \colon X \to BP\lan{n}\pc$ satisfying $H^*(f)=\theta \circ H^*(i)$, where $i \colon BP\lan{n} \to BP\lan{n}\pc$ is the $p$-completion map.  In order to prove that the given isomorphism of cohomology $\theta$ survives the spectral sequence, it suffices to show that there is nothing in total degree $-1$ in $E_2^{*,*}$ and thus no room for nontrivial differentials on $\theta$.  We will prove this in the next section, completing the proof of Theorem \ref{t:main}

\begin{remark}\label{remark:convergence}
We believe that a $p$-local version of the main theorem is true.  The only obstruction is the lack of convergence of the Adams spectral sequence 
\[
Ext_\cA^{s,t}(H^*(BP\lan{n}), H^*(X)) \Longrightarrow \pi_{t-s} Hom(X,BP\lan{n})
\]
before passage to the $p$-completion of $BP\lan{n}$ in the abutment.  Adams and Priddy get around this using a clever argument involving Adams operations which is not currently available in the general case of $BP\lan{n}$.
\end{remark}

\section{Proof of the main theorem}

We need one more ingredient. As an $\cA$-module $H^*(BP \lan{n})$ is irreducible, but when considering $H^*(BP \lan{n})$ as an $E_n$-module it splits as a direct sum of submodules, each one finite-dimensional over $\bF_p$. We make the following definition.

\begin{defn}
We define a new multiplicative grading on $H_*(BP \lan{n})$ called the weight by declaring that $\xxi_j$ and $\ttau_j$ have weight $2p^j$.
\end{defn}

The degree $\delta$ of an element is slightly less than the weight $w$, and satisfies the inequality
\[ \frac{p-1}{p} w \leq \delta \leq w-1.\]
We use $\delta$ for the degree rather than $d$, as $d$ is already overloaded. Let $H^*(BP \lan{n})[w]$ denote the dual of the weight $w$ part of $H_*(BP \lan{n})$.  By Lemma \ref{l:Q_action}, the action of $Q_i$ preserves the subspace $H^*(BP \lan{n})[w]$.  Hence
\[
 Ext^{s,t}_{E_n}(\bF_p, H^*(BP \lan{n}))
\]
splits as a direct sum
\[
 \bigoplus_w Ext^{s,t}_{E_n}(\bF_p, H^*(BP \lan{n})[w]).
\]

\begin{lemma}
Suppose $0 \leq i \leq n$. There are no infinite $v_i$-towers in odd total degree in $Ext_{E_n}^{*,*}(\bF_p, H^*(BP \lan{n})$. 
\end{lemma}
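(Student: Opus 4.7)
The plan is to use the Cartan--Eilenberg spectral sequence for the Hopf algebra extension
\[
E(Q_i) \hookrightarrow E_n \twoheadrightarrow E_n // E(Q_i) = E(Q_0, \ldots, \widehat{Q_i}, \ldots, Q_n),
\]
which takes the form
\[
E_2^{s', s''} = Ext^{s'}_{E_n // E(Q_i)}(\bF_p, Ext^{s''}_{E(Q_i)}(\bF_p, H^*(BP\lan{n}))) \Longrightarrow Ext^{s' + s''}_{E_n}(\bF_p, H^*(BP\lan{n})),
\]
filtered on the abutment by the inner index $s''$. The class $v_i$ lives at CESSS-bidegree $(s', s'') = (0, 1)$, so its action on the abutment raises the $s''$-filtration by one.

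First I would identify the coefficient module. Writing $N = H^*(BP\lan{n})$, one has $Ext^0_{E(Q_i)}(\bF_p, N) = N^{Q_i}$ and $Ext^{s''}_{E(Q_i)}(\bF_p, N) \cong H(N, Q_i)$ for $s'' \geq 1$, with $v_i$ acting from $s'' = 0$ to $s'' = 1$ as the quotient $N^{Q_i} \twoheadrightarrow H(N, Q_i)$. By Lemma \ref{l:divide}, $H(N, Q_i)$ is concentrated in even degrees. Since each generator $v_j$ ($j \neq i$) of the Koszul complex computing $Ext_{E_n // E(Q_i)}(\bF_p, -)$ has even total degree $2p^j - 2$, this outer Ext functor sends even-graded modules to even-graded modules. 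Hence the column $E_2^{*, s''}$ for $s'' \geq 1$ is concentrated in even total degrees, and the same is true of every later page including $E_\infty$.

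The conclusion then comes from a hidden-extension argument. If $[y] \in Ext_{E_n}^{*, *}(\bF_p, N)$ has odd total degree, its image in $E_\infty$ must live at $s'' = 0$, so $[y] \in F^0 \setminus F^1$. Then $v_i [y] \in F^1$, and its image in $F^1/F^2 \subset E_\infty^{*, 1}$ vanishes since $v_i$ preserves parity and $E_\infty^{*, 1}$ is even-only. Hence $v_i [y] \in F^2$. Iterating, $v_i^k [y] \in F^{2k}$ as long as the tower survives. The CESSS index satisfies $s'' \leq s$, so since $v_i^k[y]$ has Adams filtration $s([y]) + k$, one gets $2k \leq s([y]) + k$ and therefore $k \leq s([y])$, forcing termination in at most $s([y]) + 1$ steps.

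The main obstacle is the bookkeeping in this hidden-extension step: verifying that each application of $v_i$ really raises the $s''$-filtration by at least $2$ on odd-total-degree classes, rather than the automatic $+1$ coming from $v_i$'s CESSS-bidegree. This follows by combining the parity vanishing from Lemma \ref{l:divide} with the standard compatibility between the $v_i$-action and the CESSS filtration, after which the bound $s'' \leq s$ on the CESSS index forces the tower to terminate.
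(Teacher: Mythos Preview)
Your identification of the relevant spectral sequence and the parity input from Lemma~\ref{l:divide} is exactly what the paper uses: after taking $Q_i$-cohomology first, the columns $s'' \geq 1$ are concentrated in even total degree, so any odd-degree class in the abutment must be detected on the line $s'' = 0$.  The gap is in the filtration bookkeeping that follows.

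The Cartan--Eilenberg spectral sequence for the extension $E(Q_i) \hookrightarrow E_n \twoheadrightarrow E_n/\!/E(Q_i)$ is filtered on the abutment by the \emph{outer} index $s'$, not the inner index $s''$: one has $F^{p}/F^{p+1} = E_\infty^{p, n-p}$ with the differentials $d_r$ increasing $s'$.  The class $v_i$ sits at $(s', s'') = (0,1)$, so multiplication by $v_i$ \emph{preserves} the $s'$-filtration rather than raising any filtration.  Concretely, if $[y]$ is odd and detected at $(s,0)$, then $v_i[y]$ lands in $F^{s} Ext^{s+1}$; its image in $F^s/F^{s+1} = E_\infty^{s,1}$ vanishes by parity, so $v_i[y] \in F^{s+1} Ext^{s+1} = E_\infty^{s+1,0}$.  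Iterating only shows that $v_i^k[y]$ is detected at $(s+k,0)$, and nothing in your argument prevents this from continuing forever.  Your inequality $2k \leq s+k$ never gets triggered because the $s''$-coordinate stays at $0$ throughout; the ``$s''$-filtration'' you invoke, with $F^q/F^{q+1} \subset E_\infty^{*,q}$ and $v_i$ raising it, is not a filtration compatible with this spectral sequence.

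The paper closes this gap by a different mechanism: the weight decomposition $H^*(BP\lan{n}) = \bigoplus_w H^*(BP\lan{n})[w]$ as $E_n$-modules, with each summand finite-dimensional over $\bF_p$.  Working one weight at a time, the representatives $x_k$ produced by the hidden-extension zig-zag above have internal degree drifting by $2p^i - 2$ at each step, and in a fixed weight $w$ the available degrees lie in a bounded interval; this forces $x_k = 0$ after finitely many steps.  Without the weight splitting (or some substitute finiteness input) the parity argument alone does not rule out an infinite tower of hidden $v_i$-extensions living entirely on the $s'' = 0$ line.
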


\begin{proof}
Lemma \ref{l:divide} shows that if we calculate $Ext_{E_n}^{*,*}(\bF_p, H^*(BP \lan{n}))$ by first taking homology in the $i$'th direction there are no $v_i$-towers in odd degree in the associated graded. The above splitting of $Ext$ as a direct sum of $Ext_{E_n}^{*,*}(\bF_p, H^*(BP\lan{n})[w])$ shows that there is no room for extensions conspiring to make infinite $v_i$-towers.
\end{proof}

Because $H^*(BP \lan{n})$ is concentrated in non-positive degree, with the first odd-degree element $\ttau_{n+1}$ in degree $-(2p^{n+1}-1)$, the proof of Theorem \ref{t:main} amounts to a better understanding of the above lemma.

\begin{proof}[Proof of Theorem \ref{t:main}]
It suffices to prove that there is nothing in total degree $-1$ one weight at a time. Fix a weight $w$, necessarily divisible by $2p$. The lowest degree element of weight $w$ in $H_*(BP \lan{n})$ is
\[
 \xxi_1^{w/2p} \quad \text{in degree} \quad \frac{p-1}{p} w
\]
and the lowest odd degree element of weight $w$ is, assuming $w$ is large enough for such an element to exist,
\[
 \xxi_1^{w/2p-p^n} \ttau_{n+1} \quad \text{in degree} \quad \frac{p-1}{p} w +2p^n-1.
\]

Now consider some nonzero class $\alpha \in E_2^{*,*}[w] \subset H_*(H^*(BP \lan{n})[v_0,\ldots,v_n], d)$, and assume without loss of generality that $\alpha$ is a homogeneous element. By Lemma \ref{l:divide}, we can represent $\alpha$ by an element of the form
\[ \alpha_n = v_0^{r_0} \cdots v_{n-1}^{r_{n-1}} x_n^* \]
with $x_n \in H_*(BP \lan{n})$ in weight $w$ and degree $\delta_n$ (if $\alpha_n$ contained a positive $v_n$-power then $\alpha_n$ would be a $d^n$-boundary).  In other words, $\alpha_n$ is a $d^n$-cycle but not a $d^n$-boundary, and its class in $d^{n}$-homology survives the sequence of spectral sequences starting with the $d^{n-1}$ spectral sequence and ending with the $d^0$ spectral sequence to represent $\alpha$ in the associated graded.  Notice that $x_n^*$ is in  degree $-\delta_n$, so the class $\alpha = [\alpha_n]$ is in total degree
\begin{equation}\label{eq:deg_alpha}
\deg(\alpha) = r_1(2p-2) + r_2(2p^2-2)+\dotsm + r_{n-1}(2p^{n-1}-2)-\delta_n.
\end{equation}

Now we use the algorithm from Definition \ref{d:algo}. Since $\alpha_n$ is not a $d^{n}$-boundary, step $n$  returns $\alpha_n$ as a representative for the class $[\alpha]$ in the $d^{n}$-homology of $H^*(BP \lan{n})[v_0,\ldots,v_n]$.

Let's examine step $n-1$ for the example $\alpha_n = v_{n-1}^2 x^*$.  We replace $\alpha_n$ by the element $\alpha_{n-1} = v_n^2 z^*$ defined by the following picture.
\[ \xymatrix@=15pt@L=1pt{
 & \alpha_n = v_{n-1}^2 x^* & & v_{n-1} v_n y^* & & \alpha_{n-1} = v_n^2 z^* \\
v_{n-1} a^* \ar[ur]^{d^{n-1}} \ar[urrr]^{d^n} & & v_n b^* \ar[ur]^-<{d^{n-1}} \ar[urrr]^{d^n} & & 
} \]
Returning to the general sitution, Lemma \ref{l:divide} ensures that there is always such a zig-zag, trading the factor of $v_{n-1}^{r_{n-1}}$ in $\alpha_n$ for a $v_n^{r_{n-1}}$ in $\alpha_{n-1}$. Hence
\[
 \alpha_{n-1} = v_0^{r_0} \cdots v_{n-2}^{r_{n-2}} v_n^{r_{n-1}} x_{n-1}^*,
\]
for some class $x_{n-1}$ in degree
\[
 \delta_{n-1} = \delta_n + r_{n-1}(2p^n-2p^{n-1}).
\]

Similarly, in step $n-2$ we get a new representative $\alpha_{n-2}$ by iteratively replacing $v_{n-2} x^*$ with $- v_{n-1} y^* - v_n z^*$ as in the picture

\[ \xymatrix@=15pt@L=1pt{
\alpha_{n-1} = v_{n-2} x^* & v_{n-1} y^* & & & v_n z^* \\
a^* \ar[u] \ar[ur] \ar[urrrr] & & & & 
} \]
Thus $\alpha_{n-2}$ is a sum of terms of the form
\[
 v_0^{r_0} \dotsm v_{n - 3}^{r_{n - 3}} \sum_{j = 0}^{r_{n - 2}} v_{n - 1}^{r_{n - 2} - j} v_{n}^{j + r_{n - 1}} x_{n-2,j}^*.
\]
For each $j$ in the sum, $\deg(x_{n-2,j}) \geq \delta_{n-2}$ where
\[
 \delta_{n-2} = \delta_{n-1} + r_{n-2}(2p^{n-1}-2p^{n-2}).
\]

Continuing in this way, we end up with a representative
\[
 \alpha_0 = \sum_{j} v_{I_j} x_{0,j}^*
\]
for $\alpha$, where $v_{I_j}$ is a monomial in the $v_{i}$ with $i \geq 1$.  Also each $\deg(x_{0,j}) \geq \delta_0$, where $\delta_0$ is given by
\begin{equation}\label{eq_delta0}
 \delta_0 =  \delta_n + r_0(2p-2) + r_1(2p^2-2p)+\ldots+r_{n-1}(2p^n-2p^{n-1}).
 \end{equation}
The bound on the degree of $x_{0, j}$ is a consequence of the fact that each time we trade in a $v_i$ we increase the degree of the elements in $H_*(BP \lan{n})$ by at least $2p^{i+1}-2p^i$.

Because $x_n$ is in odd degree $\delta_n$ with weight $w$, we have:
\[
\delta_{n} \geq \frac{p - 1}{p}w + 2p^{n} - 1.
\]
Therefore, equation \eqref{eq:deg_alpha} implies that:
\[ 
 r_1(2p-2)+\dotsm+r_{n-1}(2p^{n-1}-2) \geq \frac{p-1}{p} w + 2p^{n} - 1 + \deg(\alpha).
\]
Since $2p^{i+1}-2p^i > 2p^{i} - 2$, equation \eqref{eq_delta0} gives:
\begin{align*}
\delta_0 &\geq \delta_n + r_0(2p - 2) + \frac{p - 1}{p}w + 2p^n - 1 + \deg(\alpha) \\
&\geq \frac{p-1}{p} w + 2p^n-1 + \frac{p-1}{p} w + 2p^n - 1 + \deg(\alpha) \\
&\geq  w + 4p^n - 2 + \deg(\alpha)
\end{align*}
The weight and degree of an element of $H_*(BP\lan{n})$ must satisfy $w \geq \delta + 1$, so we deduce that
\[
\deg(\alpha) \leq 1 - 4p^n.
\]
In particular, $\alpha$ cannot lie in total degree $-1$.  This finishes the proof of the main theorem.

\end{proof}

\bibliographystyle{plain}
\bibliography{b}

\end{document}